\newlength\figureheight	
\newlength\figurewidth	
\newtheorem{theorem}{Theorem}
\newtheorem{lemma}[theorem]{Lemma}
\newtheorem{proposition}[theorem]{Proposition}
\newtheorem{definition}[theorem]{Definition}
\newtheorem{remark}[theorem]{Remark}
\newtheorem{assumption}[theorem]{Assumption}
\providecommand{\ef}{\;.}
\providecommand{\ec}{\;,}
\providecommand{\fa}{\forall\;}         
\providecommand{\eqdef}{\triangleq}     
\providecommand{\eqcon}{\overset{!}{=}} 
\providecommand{\tp}{^{\scriptsize{\mathrm{T}}}} 
\providecommand{\to}{\rightarrow}                        
\newcommand{\conv}[1]{\text{conv}\left(#1\right)}        
\newcommand{\vct}[1]{\mathrm{vec}\left(#1\right)}  
\newcommand{\BZ}{\mathbb{Z}}        
\newcommand{\BR}{\mathbb{R}}        
\newcommand{\BX}{\mathbb{X}} 
\newcommand{\BU}{\mathbb{U}}
\newcommand{\BD}{\mathbb{D}} 
\newcommand{\CX}{\mathcal{X}} 
\newcommand{\CP}{\mathcal{P}} 
\definecolor{ULOcean}{RGB}{0,75,90}
\definecolor{ULOrange}{RGB}{236,116,4}
\definecolor{LiteRed}{RGB}{255,206,206}
\definecolor{MedRed}{RGB}{204,24,24}
\definecolor{DarkRed}{RGB}{127,15,15}
\definecolor{LiteYellow}{RGB}{255,255,100}
\definecolor{MedYellow}{RGB}{255,255,25}
\definecolor{DarkYellow}{RGB}{225,225,0}
\definecolor{LiteBlue}{RGB}{170,184,255}
\definecolor{MedBlue}{RGB}{10,20,204}
\definecolor{DarkBlue}{RGB}{27,29,120}
\definecolor{LiteGreen}{RGB}{24,204,75}
\definecolor{MedGreen}{RGB}{0,130,37}
\definecolor{DarkGreen}{RGB}{0,61,17}
\definecolor{LiteViolet}{RGB}{110,0,225}
\definecolor{MedViolet}{RGB}{73,0,148}
\definecolor{DarkViolet}{RGB}{53,0,107}
\definecolor{LiteOrange}{RGB}{255,226,117}
\definecolor{MedOrange}{RGB}{255,206,0}
\definecolor{DarkOrange}{RGB}{204,167,10}
\definecolor{LiteGray}{RGB}{230,230,230}
\definecolor{MedGray}{RGB}{160,160,160}
\definecolor{DarkGray}{RGB}{100,100,100}
\title{\LARGE \bf
Robust MPC for Large-scale Linear Systems
}
\author{Georg Schildbach$^{*}$
\thanks{$^{*}$Autonomous Systems Laboratory, University of L{\"u}beck, L{\"u}beck, Germany (email: {\tt\small georg.schildbach@uni-luebeck.de}).}
}
\begin{document}

\maketitle
\thispagestyle{empty}
\pagestyle{empty}


\begin{abstract}
  State-of-the-art approaches of Robust Model Predictive Control (MPC) are restricted to linear systems of relatively small scale, i.e., with no more than about 5 states. The main reason is the computational burden of determining a robust positively invariant (RPI) set, whose complexity suffers from the curse of dimensionality. The recently proposed approach of Deadbeat Robust Model Predictive Control (DRMPC) is the first that does not rely on an RPI set. Yet it comes with the full set of essential system theoretic guarantees. DRMPC is hence a viable option, in particular, for large-scale systems. This paper introduces a detailed design procedure for DRMPC. It is shown that the optimal control problem generated for DRMPC has exactly the same computational complexity as Nominal MPC. A numerical study validates its applicability to randomly generated large-scale linear systems of various dimensions.
\end{abstract}

\vspace*{-0.05cm}
\section{INTRODUCTION}\label{Sec:Intro}

Originating from process industry, for systems with slow dynamics \cite{MorariLee:1999}, Model Predictive Control (MPC) has spread to applications with sampling times in the millisecond range, and even below \cite{VasquezEtAl:2014}. However, MPC remains attractive for the control of large-scale systems, such as chemical plants \cite{VenkatEtAl:2007}, building climate control \cite{RobillartEtAl:2019}, energy networks \cite{JiaEtAl:2024}, and supply chains \cite{SchildMor:2016}.

In most practical applications, an exact model of the system is not available. Therefore, a certain degree of robustness to model uncertainty or disturbances is of great importance. The theory of Robust MPC has advanced significantly over the past decades, with powerful and highly researched frameworks being readily available \cite{Kerrigan:2000,Mayne:2005,Goulart:2006}. However, all existing methods of Robust MPC are limited to small-scale systems. The main reason is that all of them require the (offline) computation of robust positively invariant sets, including the most popular approaches, Tube MPC \cite{Mayne:2005} and Robust MPC with Affine Disturbance Feedback \cite{Goulart:2006}. As a result, these conventional approaches may work for systems with state dimensions up to 5, as a rule of thumb.

Commonly explored techniques to circumvent this limitations include model reduction \cite{RobillartEtAl:2019} and distributed MPC \cite{VenkatEtAl:2007}. However, model reduction is not always applicable and leads to additional conservatism, because part of the system dynamics are modeled as disturbances. Distributed MPC comes with an increased computational burden, even if it is distributed, and also leads additional conservatism.

This paper leverages the recently proposed approach of Deadbeat Robust Model Predictive Control (DRMPC) \cite{Schildbach:2025,SchildbachAbbas:2025} for large-scale systems. Roughly speaking, the target is systems with state dimensions of 5 or higher. A precise definition of what `large-scale' means in this context, and which systems fall within the scope of DRMPC, is given in the paper. The paper also provides a detailed description of all steps required for the practical design of the DRMPC, along with a summary of essential system theoretic guarantees. A numerical study validates the applicability of DRMPC to randomly generated large-scale systems, showing that it goes far beyond the capabilities of conventional Robust MPC approaches.

\section{TERMINOLOGY AND NOTATION}

$\mathbb{R}$ and $\mathbb{Z}$ denote the sets of real and integral numbers. $\mathbb{R}_{+}$ ($\mathbb{R}_{0+}$) and $\mathbb{Z}_{+}$ ($\mathbb{Z}_{0+}$) are then the sets of positive (non-negative) real and integral numbers, respectively. 

$\mathbb{R}^{n}$ represents a vector space of dimension $n\in\mathbb{Z}_{+}$. The $i$-th component of a vector $v\in\mathbb{R}^{n}$ is denoted $v_{[i]}\in\BR$. The notation $\|\cdot\|$ is used to denote any norm. If $A\in\BR^{n\times m}$ is a matrix, $A_{[i,\cdot]}\tp\in\BR^{m}$ denotes the $i$-th row and $A_{[\cdot,j]}\in\BR^{n}$ the $j$-th column of $A$. The expression $\vct{A}\in\BR^{nm}$ represents the vectorization of $A$, i.e., a column vector containing the stacked-up columns of $A$.

If $\mathbb{S},\mathbb{T}\subset\mathbb{R}^{n}$ are sets, the \emph{Minkowski sum} is defined as
\begin{equation*}
	\mathbb{S}\oplus \mathbb{T}\triangleq\bigl\{s+t\in\mathbb{R}^{n}\:\big|\: s\in \mathbb{S},\:t\in \mathbb{T}\}\ec
\end{equation*}
and the \emph{Pontryagin difference} as
\begin{equation*}
	\mathbb{S}\ominus \mathbb{T}\triangleq\bigl\{\xi\in\mathbb{R}^{n}\:\big|\:\xi+t\in \mathbb{S}\:\:\forall\:t\in \mathbb{T}\}\;.
\end{equation*}

A polyhedron $\mathbb{P}$ is the finite intersection of closed half-spaces in $\mathbb{R}^{n}$, and a polytope is a bounded polyhedron. The \emph{H-representation} $\mathcal{H}(\mathbb{P})$ is the (unique) finite set of non-redundant half-spaces, called \emph{facets}, constituting $\mathbb{P}$, i.e., some $G\in\BR^{m\times n}$ and $h\in\BR^{m}$ such that
\begin{equation*}
	\mathbb{P}=\bigl\{\xi\in\BR^{n}\:\big|\:G_{i,\cdot}\xi\leq h_{i}\:\:\forall\:i=1,2,\dots,m\}\ef
\end{equation*}
The \emph{V-representation} $\mathcal{V}(\mathbb{P})$ is the unique, finite set of non-redundant points $v_{j}$, called \emph{vertices}, such that
\begin{equation*}
	\mathbb{P}=\conv{\bigl\{v_{j}\in\BR^{n}\:\big|\:j=1,2,\dots,p\bigr\}}\ef
\end{equation*}
The expression $\conv{\cdot}$ denotes the convex hull of a set. The procedure of finding $\mathcal{V}(\mathbb{P})$ given $\mathcal{H}(\mathbb{P})$ is called \emph{vertex enumeration}; and finding $\mathcal{H}(\mathbb{P})$ given $\mathcal{V}(\mathbb{P})$ is called \emph{facet enumeration}. 

A function $\alpha:\BR_{0+}\rightarrow\BR_{0+}$ is a \emph{$K$-function} if it is continuous, strictly monotonically increasing, and $\alpha(0)=0$. It is a \emph{$K_{\infty}$-function} if, in addition, $\alpha(r)\rightarrow\infty$ as $r\rightarrow\infty$. A function $\beta:\BR_{0+}\times\BZ_{0+}\rightarrow\BR_{0+}$ is a \emph{$KL$-function} if $\beta(\,\cdot\,,k)$ is a $\text{K}$-function for any fixed $k\in\BZ_{0+}$, and $\beta(r,\,\cdot\,)$ is monotonically decreasing with $\beta(r,k)\rightarrow 0$ as $k\rightarrow\infty$ for any fixed $r\in\BR_{0+}$.

\section{PROBLEM STATEMENT}

\subsection{Control System}\label{Sec:System}

Starting point is a (large-scale) discrete-time, linear-time invariant control system with additive disturbances
\begin{equation}\label{Equ:DTSystem}
  x_{k+1}=Ax_{k}+Bu_{k}+d_{k}\;.
\end{equation}
Here $k\in\mathbb{Z}_{0+}$ denotes the time step, $x_{k}\in\mathbb{R}^{n}$ and $u_{k}\in\mathbb{R}^{m}$ are the state and the input and $d_{k}\in\mathbb{R}^{n}$ is the disturbance at time step $k$, respectively. System \eqref{Equ:DTSystem} comes with a given initial condition $x_{0}\in\mathbb{R}^{n}$.

\begin{assumption}[\textbf{control system}]\label{The:System}
  (a) The state of the system $x_{k}$ can be measured in each step $k$. (b) The system matrix $A\in\BR^{n\times n}$ and input matrix $B\in\BR^{n\times m}$ form a controllable pair.
\end{assumption}

The control objective is to regulate the state of system \eqref{Equ:DTSystem} to the origin, while respecting state and input constraints:
\begin{equation}\label{Equ:Constraints}
  x_{k}\in\BX\subseteq\BR^{n}\;,\enspace u_{k}\in\BU\subseteq\BR^{m}\quad\fa k\in\mathbb{Z}_{0+}\ef
\end{equation}
The additive disturbance is also contained in a given disturbance set:
\begin{equation}\label{Equ:DisSet}
  d_{k}\in\BD\subseteq\BR^{n}\quad\fa k\in\mathbb{Z}_{0+}\;.
\end{equation}

\begin{assumption}[\textbf{constraint sets}]\label{The:Constraints}
  (a) The input and state constraint sets $\BU$ and $\BX$ are polyhedrons containing the origin. (b) The disturbance set $\BD$ is a polytope containing the origin.
\end{assumption}

\subsection{Large-Scale System}\label{Sec:LargeScale}

The theory of this paper holds for \emph{all} linear systems \eqref{Equ:DTSystem}. However, the specific focus of this paper is on \emph{large-scale} systems. This section provides a more precise mathematical characterization of what `large-scale' means in the context or Robust MPC; i.e., what is assumed to be computationally possible and what is prohibitive. It is assumed that a desired prediction horizon $N\geq n$ has already been selected.

\begin{assumption}[\textbf{Nominal MPC}]\label{The:NominalMPC}
  (a) The sets $\BU$, $\BX$, $\BD$ are given, and stored, in H-representation. (b) The sets $\BU$ and $\BX$ are such that the finite-horizon optimal control problem (FHOCP) corresponding to \emph{Nominal MPC}, i.e., without any disturbances, is computationally feasible (in real time).
\end{assumption}

The H-representations of $\BU$ and $\BX$ and $\BD$ are denoted as $G^{(\mathrm{u})}$, $h^{(\mathrm{u})}$ and $G^{(\mathrm{x})}$, $h^{(\mathrm{x})}$ and $G^{(\mathrm{d})}$, $h^{(\mathrm{d})}$, respectively. The conversion of an H-representation to its corresponding V-representation, also called \emph{vertex enumeration}, is known to be computationally hard. In fact, the complexity of vertex enumeration is usually polynomial in the input and output size, and exponential only in some special (degenerate) cases \cite{AvisEtAl:1997}. The input and output size, i.e., the combination of H-representation and V-representation of a polyhedron, however, generally grows exponentially with the dimension, even in simple cases (e.g., a hyper-cube). 

\begin{assumption}[\textbf{Minkowski sum, \textit{optional}}]\label{The:MinkowskiSumAss}
  (a) Vertex enumeration for the sets $\BU$, $\BX$, $\BD$ is computationally prohibitive. (b) Computing the Minkowski sum of any two of these sets, or their linear transformations, is intractable.
\end{assumption}

\begin{assumption}[\textbf{Pontryagin difference}]\label{The:PontryaginDiffAss}
  It is possible to perform linear transformations and Pontryagin differences of all relevant sets.
\end{assumption}

To exemplify Assumption \ref{The:PontryaginDiffAss}, consider a \emph{linear transformation} of the set $\BD\subset\BR^{n}$ by a matrix $K\in\BR^{m\times n}$. The set $K\BD$ is a polyhedron in $\BR^{m}$, characterized by
\begin{equation}
    K\BD =\Bigl\{Kd\;\Big|\;d\in\BD\Bigr\}\ef
\end{equation}

\begin{proposition}\label{The:PontryaginDiff}
	Let $K\in\BR^{m\times n}$ be any linear transformation matrix. The Pontryagin difference $\BU\ominus K\BD$ is given by
	\begin{subequations}
	\begin{equation}\label{Equ:PontryaginDiff1}
	    u\in\BU\ominus K\BD\quad\Longleftrightarrow\quad G^{(\mathrm{u})}u\leq h^{(\mathrm{u})}-s
	\end{equation}
	where $s$ is a vector whose elements satisfy\\
	\begin{equation}\label{Equ:PontryaginDiff2}
	    \hspace*{4.5cm}s_{[i]}\eqdef \max_{d\in\BD} G^{(\mathrm{u})}_{[i,\cdot]}Kd\ef
	\end{equation}
	\end{subequations}
\end{proposition}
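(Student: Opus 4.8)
The plan is to unwind the definition of the Pontryagin difference and then exploit the H-representation of $\BU$ together with the fact that $\BD$ is a polytope. First I would note that, by the definition of the linear image $K\BD=\{Kd\mid d\in\BD\}$, the element $t$ ranges over $K\BD$ if and only if $t=Kd$ for some $d\in\BD$. Hence the universal quantifier ``$u+t\in\BU$ for all $t\in K\BD$'' appearing in the definition of $\BU\ominus K\BD$ is equivalent to ``$u+Kd\in\BU$ for all $d\in\BD$''. This is the only place where the structure of $K\BD$ enters, and it is purely a rewriting of the quantifier.

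Next I would substitute the H-representation of $\BU$: the condition $u+Kd\in\BU$ is, by definition, $G^{(\mathrm{u})}(u+Kd)\leq h^{(\mathrm{u})}$, which read row by row is $G^{(\mathrm{u})}_{[i,\cdot]}u+G^{(\mathrm{u})}_{[i,\cdot]}Kd\leq h^{(\mathrm{u})}_{[i]}$ for every facet index $i$. Since the rows decouple and the requirement must hold simultaneously for every $d\in\BD$, it is equivalent to demanding, separately for each $i$, that $G^{(\mathrm{u})}_{[i,\cdot]}u\leq h^{(\mathrm{u})}_{[i]}-\sup_{d\in\BD}G^{(\mathrm{u})}_{[i,\cdot]}Kd$. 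Identifying $s_{[i]}$ with this supremum gives exactly \eqref{Equ:PontryaginDiff1}–\eqref{Equ:PontryaginDiff2}.

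The only point needing a word of justification is that the supremum is attained and finite, so that $s$ is a well-defined real vector: this follows from Assumption \ref{The:Constraints}(b), which makes $\BD$ a nonempty polytope, hence a nonempty compact set, and from the continuity (indeed linearity) of $d\mapsto G^{(\mathrm{u})}_{[i,\cdot]}Kd$, so that Weierstrass' theorem applies — equivalently, \eqref{Equ:PontryaginDiff2} is a linear program over a nonempty bounded feasible set and therefore has an optimal solution. I do not anticipate any real obstacle here; the only mild subtlety is to keep the quantifier ``for all $d\in\BD$'' correctly attached on a per-row basis when passing to the worst case, and to observe that the half-space description produced this way correctly characterizes the set even though its facets need not be irredundant in the sense of the H-representation.
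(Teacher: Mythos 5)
Your proposal is correct and follows essentially the same route as the paper's proof: unwind the Pontryagin difference, substitute the H-representation of $\BU$, and pass to the row-wise worst case over $d\in\BD$. The extra remarks on attainment of the maximum (compactness of $\BD$) and on keeping the quantifier per row are details the paper leaves implicit in its final ``follows immediately'' step, but they do not change the argument.
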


\begin{proof} Observe the following equivalence:
  \begin{align*}
    \BU\ominus K\BD &=\Bigl\{u\in\BR^{m}\;\Big|\;u+Kd\in\BU\;\;\fa d\in\BD\Bigr\}\\
                    &=\Bigl\{u\in\BR^{m}\;\Big|\;G^{(\mathrm{u})}\bigl(u+Kd\bigr)\leq h^{(\mathrm{u})}\;\;\fa d\in\BD\Bigr\}\\
                    &=\Bigl\{u\in\BR^{m}\;\Big|\;G^{(\mathrm{u})}u\leq h^{(\mathrm{u})}-G^{(\mathrm{u})}Kd\;\;\fa d\in\BD\Bigr\}\ef 
  \end{align*}
  The result follows immediately from the last line.
\end{proof}

The maximization on the right-hand side of \eqref{Equ:PontryaginDiff1} represents a Linear Program (LP). Hence, computing the Pontryagin difference in Proposition \ref{The:PontryaginDiff} amounts to solving a finite number of LPs. In this exemplary case, the number of LPs corresponds to the size of the H-representation of $\BU$; each LP has $n$ decision variables and as many linear constraints as the H-representation of $\BD$. The complexity of the resulting polyhedron $\BU\ominus K\BD$ is \emph{exactly} the same as that of the original polyhedron $\BU$, as only the right-hand side of the H-representation is modified.

Almost all Robust MPC approaches (except the one used in this paper) require the pre-computation of robust positively invariant sets \cite{Mayne:2005,Goulart:2006}. Their definition \cite{Blanchini:1999} is based on a \emph{pre-stabilization} of system \eqref{Equ:DTSystem}:
\begin{equation}
  u_{k}=Kx_{k}\quad\Longrightarrow\quad x_{k+1}=\bigl(A+BK)x_{k}+d_{k}\ec
\end{equation}
where $K\in\BR^{m\times n}$ is a stabilizing \emph{feedback gain}, which exists by virtue of Assumption \ref{The:System}.

\begin{definition}[(robust) positively invariant set]\label{Def:PosInvSet}
  (a) A set $\CX\subset\BX$ is called a \emph{positively invariant set} (PI set) of system \eqref{Equ:DTSystem} if for any $x\in\CX$ it holds that $Kx\in\BU$ and $(A+BK)x\in\CX$. (b) It is called a \emph{robust positively invariant set} (RPI set) if, additionally, $(A+BK)x+d\in\CX$ for all $d\in\BD$.
\end{definition}

The complexity of computing RPI sets can be easy, in particular cases, but it is generally very hard \cite{Kerrigan:2000,RakEtAl:2005,RakBar:2010}. It depends on whether a maximal, minimal, or any type of RPI set is required or acceptable; and it increases quickly with the system dimensions (`curse of dimensionality'). Practical experience indicates that a computation is almost always intractable for systems with state dimension $n\geq 5$. In this paper, simply the following is assumed.

\begin{assumption}[\textbf{robust positively invariant set, \textit{optional}}]\label{The:InvariantSet}
  The computation of an RPI set for system \eqref{Equ:DTSystem} is prohibitive.
\end{assumption}


\section{DEADBEAT ROBUST MODEL PREDICTIVE CONTROL}

\subsection{Deadbeat Prestabilization}\label{Sec:DeadbeatPolicy}

A slightly modified version of the original DRMPC approach \cite{Schildbach:2025} is used in this paper. It is based on a \emph{deadbeat (disturbance feedback) policy} for pre-stabilization, instead of \emph{deadbeat input sequences} \cite{SchildbachAbbas:2025}. For the following derivation, consider system \eqref{Equ:DTSystem}, starting from an initial condition $x_{0}=d_{0}$ that corresponds to a single disturbance $d_{0}\in\mathbb{D}$.

\begin{definition}[\textbf{deadbeat horizon}]\label{The:DeadbeatHorizon}
The \emph{deadbeat horizon} $M$ \cite{Schildbach:2025} is the smallest number such that the matrix $\mathcal{P}_{M}\in\BR^{n\times nm}$,
\begin{equation}\label{Equ:ControllabilityMatrix}
  \mathcal{P}_{M}\eqdef
  \Bigl[
  \begin{array}{c|c|c|c|c}
     A^{M-1}B & A^{M-2}B_{0} & \;\cdots\; & AB & B
  \end{array}
  \Bigr]\ec
\end{equation}
has full rank $n$.
\end{definition}
By virtue of Assumption \ref{The:System}(b), $M$ is well defined and satisfies $M\leq n$, because $\mathcal{P}_{n}$ is the \emph{controllability matrix} of \eqref{Equ:DTSystem}. This implies the following result.

\begin{lemma}[\textbf{deadbeat disturbance feedback policy}]\label{The:DeadbeatPolicy}
There exists a \emph{deadbeat (disturbance feedback) policy}, consisting of $M$ linear feedback gains $K_{0},K_{1},\dots,K_{M-1}\in\BR^{m\times n}$ with
\begin{equation}\label{Equ:DeadbeatPolicy}
  \bar{u}_{0}=K_{0}d_{0}\ec\;\bar{u}_{1}=K_{1}d_{0}\ec\;\dots\ec\;\bar{u}_{M-1}=K_{M-1}d_{0}\ec
\end{equation}
such that $x_{M}=0$ for any initial condition $x_{0}=d_{0}$.
\end{lemma}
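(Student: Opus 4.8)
The plan is to reduce the claim to the solvability of a single linear system and then read off the gains by a partition. First I would propagate \eqref{Equ:DTSystem} from the initial condition $x_0=d_0$ under the single-disturbance assumption, i.e., with $d_k=0$ for all $k\geq 1$. Iterating the dynamics $M$ times gives
\begin{equation*}
  x_M = A^M d_0 + \sum_{j=0}^{M-1} A^{M-1-j} B\,\bar u_j
      = A^M d_0 + \mathcal{P}_M \begin{bmatrix}\bar u_0\\ \vdots\\ \bar u_{M-1}\end{bmatrix},
\end{equation*}
where $\mathcal{P}_M$ is precisely the matrix in \eqref{Equ:ControllabilityMatrix} (the column ordering of $\mathcal{P}_M$ matches the term $A^{M-1-j}B\bar u_j$). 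Hence $x_M=0$ is equivalent to the linear equation $\mathcal{P}_M\,\bar{\mathbf u} = -A^M d_0$ in the stacked input $\bar{\mathbf u}\eqdef\bigl[\bar u_0\tp\;\cdots\;\bar u_{M-1}\tp\bigr]\tp\in\BR^{mM}$.

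Second, I would invoke Definition \ref{The:DeadbeatHorizon}: $\mathcal{P}_M$ has full row rank $n$, so $\mathcal{P}_M\mathcal{P}_M\tp$ is invertible and the minimum-norm solution
\begin{equation*}
  \bar{\mathbf u} = -\mathcal{P}_M\tp\bigl(\mathcal{P}_M\mathcal{P}_M\tp\bigr)\inv A^M d_0 =: \mathcal{K}\,d_0
\end{equation*}
exists for every $d_0\in\BR^n$, with the single matrix $\mathcal{K}\in\BR^{mM\times n}$ independent of $d_0$ (any other right inverse of $\mathcal{P}_M$ would do equally well, so the gains are not unique). Partitioning $\mathcal{K}$ into $M$ consecutive $m\times n$ blocks, $\mathcal{K}=\bigl[K_0\tp\;K_1\tp\;\cdots\;K_{M-1}\tp\bigr]\tp$, yields $\bar u_i=K_i d_0$ exactly as in \eqref{Equ:DeadbeatPolicy}, and by construction this input sequence steers $x_0=d_0$ to $x_M=0$. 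Well-definedness of $M$ and the bound $M\leq n$ are already secured by Assumption \ref{The:System}(b), as noted before the lemma statement.

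I do not expect a genuine obstacle here, since the argument is elementary linear algebra. The only two points needing a little care are the index bookkeeping between the sum $\sum_j A^{M-1-j}B\bar u_j$ and the column order of $\mathcal{P}_M$, and the observation that, because we start from $x_0=d_0$ with no subsequent disturbances, the open-loop sequence $(\bar u_0,\dots,\bar u_{M-1})$ is simultaneously a \emph{disturbance-feedback} policy in the sense of \eqref{Equ:DeadbeatPolicy} — each $\bar u_i$ is a linear function of the single disturbance $d_0$ only.
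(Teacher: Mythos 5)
Your proposal is correct and follows essentially the same route as the paper: both reduce the deadbeat condition to the linear equation $-A^{M}d_{0}=\mathcal{P}_{M}\bigl[\bar u_{0}\tp\;\cdots\;\bar u_{M-1}\tp\bigr]\tp$ and invoke the full row rank of $\mathcal{P}_{M}$ from Definition \ref{The:DeadbeatHorizon} to guarantee solvability for every $d_{0}$. The only (cosmetic) difference is that you exhibit the gains explicitly via the right inverse $\mathcal{K}=-\mathcal{P}_{M}\tp(\mathcal{P}_{M}\mathcal{P}_{M}\tp)\inv A^{M}$, whereas the paper matches coefficients of $d_{0}$ and argues solvability of the resulting vectorized linear system \eqref{Equ:DeadbeatCondition4}.
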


\begin{proof}
First, observe that the first $M$ states of the nominal system---i.e., system \eqref{Equ:DTSystem} without any disturbances---can be expressed as
\begin{subequations}\label{Equ:FirstMStates}\begin{align}
  \bar{x}_{1} &= Ad_{0}+Bu_{0}\ec\\
  \bar{x}_{2} &= A^{2}d_{0}+ABu_{0}+Bu_{1}\ec\\
  &\cdots\nonumber\\
  \bar{x}_{M-1} &= A^{M-1}d_{0}+A^{M-2}Bu_{0}+\dots+Bu_{M-2}\ec\\
  \bar{x}_{M} &= A^{M}d_{0}+A^{M-1}Bu_{0}+\dots+Bu_{M-1}\;.
\end{align}\end{subequations}
Since the matrix $\mathcal{P}_{M}$ in \eqref{Equ:ControllabilityMatrix} has full rank, there exists a sequence of deadbeat inputs $\bar{u}_{0},\bar{u}_{1},\dots,\bar{u}_{M-1}$ that force $\bar{x}_{M}=0$. Using $\bar{x}_{M}\eqcon 0$, (\ref{Equ:FirstMStates}d) becomes
\begin{equation}\label{Equ:DeadbeatCondition1}
  -A^{M}d_{0} \eqcon A^{M-1}B\bar{u}_{0}+A^{M-2}B\bar{u}_{1}+\dots+B\bar{u}_{M-1}\;.
\end{equation}
To verify that the deadbeat policy is indeed well-defined, substitute \eqref{Equ:DeadbeatPolicy} into \eqref{Equ:DeadbeatCondition1} to obtain
\begin{multline}\label{Equ:DeadbeatCondition2}
  -A^{M}d_{0} =\\ 
  \Bigl[
  \begin{array}{c|c|c|c}
  A^{M-1}BK_{0} 
  & A^{M-2}BK_{1}
  &\dots
  &BK_{M-1}
  \end{array}
  \Bigr]d_{0}\;.
\end{multline}
Since the deadbeat policy must work for all $d_{0}\in\BR^{n}$, the matrices multiplying $d_{0}$ in \eqref{Equ:DeadbeatCondition2} on both sides must be equal, thus
\begin{multline}\label{Equ:DeadbeatCondition3}
  -A^{M} = \\
  \Bigl[
  \begin{array}{c|c|c|c}
  A^{M-1}B
  & A^{M-2}B
  &\dots
  &B
  \end{array}
  \Bigr]
  \left[
  \begin{array}{c}
  K_{0} \\\hline
  K_{1}\\\hline
  \dots\\\hline
  K_{M-2}\\\hline
  K_{M-1}
  \end{array}
  \right]\;.
\end{multline}
This can be expressed as a linear equation system to solve for the linear feeback gains $K_{0},K_{1},\dots,K_{M-1}$:
\begin{multline}\label{Equ:DeadbeatCondition4}
  -\vct{A^{M}} =\\
  \left[\hspace*{-0.2cm}
  \begin{array}{ccccc}
    \mathcal{P}_{M} & 0 & \cdots & 0 & 0\\
    0 & \mathcal{P}_{M} & \cdots & 0 & 0\\
    \vdots & & \ddots & & \vdots\\
    0 & 0 & \cdots & \mathcal{P}_{M} & 0\\
    0 & 0 & \cdots & 0 & \mathcal{P}_{M}
  \end{array}\hspace*{-0.2cm}\right]
  \vct{\left[
  \begin{array}{c}
  K_{0} \\\hline
  K_{1}\\\hline
  \dots\\\hline
  \hspace*{-0.2cm}K_{M-2}\hspace*{-0.2cm}\\\hline
  \hspace*{-0.2cm}K_{M-1}\hspace*{-0.2cm}
  \end{array}
  \right]}
  \;.
\end{multline}
The claim follows from the fact that $\mathcal{P}_{M}$ has full rank $n$, according to Definition \ref{The:DeadbeatHorizon}, and hence the matrix on the right-hand side of \eqref{Equ:DeadbeatCondition4} has full rank $Mn$.
\end{proof}

\begin{remark}
  Alternatively, it is possible to calculate a \emph{time-invariant} disturbance feedback gain $K\in\BR^{m}$ by pole placement. Placing all $n$ poles at $0$ by an appropriate choice of $K$ renders the matrix $(A+BK)$ nilpotent; hence the deadbeat property follows. However, this only works for the choice of $M=n$.
\end{remark}

Note that Lemma \ref{The:DeadbeatPolicy} not only proves the existence of the deadbeat policy. It also provides a constructive approach for computing the corresponding feedback gains $K_{0},K_{1},\dots,K_{M-1}$, namely by solving the linear equation system \eqref{Equ:DeadbeatCondition4}. In fact, \eqref{Equ:DeadbeatCondition4} can be decomposed into $n$ independent linear equation systems, each for one the $n$ collective columns of the feedback gains $K_{0},K_{1},\dots,K_{M-1}$. This poses no computational problem, even for large scale systems, and can be performed offline.

\begin{remark}\label{The:ChoiceOfM}
  (a) If the deadbeat policy is not unique, then some additional cost function (e.g., least squares) has to be defined when solving for $K_{0},K_{1},\dots,K_{M-1}$, subject to the equality constraint \eqref{Equ:DeadbeatCondition4}.
  (b) The deadbeat horizon $M$ may be deliberately chosen as a higher number than the smallest number for which $\mathcal{P}_{M}$ has full rank $n$. This opens (additional) degrees of freedom in the choice of the deadbeat policy.
  (c) The condition $N\geq n$ for the choice of the prediction horizon, as stated above, may be relaxed to $N\geq M$.
\end{remark}

\subsection{Deadbeat Robust Model Predictive Control}\label{Sec:FTOCP}

For the setup of the Finite-time Optimal Control Problem (FTOCP), it is assumed that a sequence of deadbeat feedback gains $K_{0},K_{1},\dots,K_{M-1}$ have been pre-computed offline, as described in Section \ref{Sec:DeadbeatPolicy}.

Similar to \cite{Schildbach:2025}, the setup of the DRMPC problem is described only for step $k=0$, as the generalization to subsequent steps $k=1,2,\dots$ is straightforward. Based on Lemma \ref{The:DeadbeatPolicy}, the following affine disturbance feedback policy should be applied in order to account for the deadbeat of each disturbance over the prediction horizon:
\begin{subequations}\label{Equ:DistCompensation}\begin{align}
  u_{0}&=u_{0|0}\ec\\
  u_{1}&=u_{1|0}+K_{0}d_{0}\ec\\
  u_{2}&=u_{2|0}+K_{1}d_{0}+K_{0}d_{1}\ec\\
  u_{3}&=u_{3|0}+K_{2}d_{0}+K_{1}d_{1}+K_{0}d_{2}\ec\\
  &\;\;\vdots\nonumber\\
  u_{M}&=u_{M|0}+\,\,K_{M-1}d_{0}+\hdots+K_{0}d_{M-1}\ec
\end{align}\end{subequations}
\noindent up to the deadbeat horizon, and beyond that
\addtocounter{equation}{-1}
\begin{subequations}\setcounter{equation}{5}\begin{equation}
  u_{j}=u_{j|0}+\sum_{i=0}^{M-1}K_{i}d_{j-i-1}\qquad\fa M\leq j\leq N-1\ef
\end{equation}\end{subequations}
The new notation $u_{k+j|k}$ is used for the nominal control input in step $k+j$, as planned in step $k$, and $x_{k+j|k}$ for the corresponding predicted nominal state, where $k,j\in\BZ_{0+}$. This scheme suggests the following tightening of the input constraints for the nominal control inputs:
\begin{subequations}\label{Equ:TightenedIC}\begin{align}
  u_{0|0}&\in\BU\ec\\
  u_{1|0}&\in\BU\ominus K_{0}\BD\ec\\
  u_{2|0}&\in\BU\ominus K_{0}\BD\ominus K_{1}\BD\ec\\
  u_{3|0}&\in\BU\ominus K_{0}\BD\ominus K_{1}\BD\ominus K_{2}\BD\ec\\
  &\;\;\vdots\nonumber\\
  u_{M|0}&\in\BU\ominus K_{0}\BD\ominus K_{1}\BD\ominus\hdots\ominus K_{M-1}\BD\ec
\end{align}\end{subequations}
\noindent up to the deadbeat horizon, and beyond that
\addtocounter{equation}{-1}
\begin{subequations}\setcounter{equation}{5}\begin{multline}
  u_{j|0}\in\BU\ominus K_{0}\BD\ominus K_{1}\BD\ominus\cdots\ominus K_{M-1}\BD\\\fa M\leq j\leq N-1\ef
\end{multline}\end{subequations}
For the tightening of the state constraints, define the auxiliary matrices
\begin{subequations}\label{Equ:AuxMatrices}\begin{align}
  \Phi_{0}&\eqdef A+BK_{0}\ec\\
  \Phi_{1}&\eqdef A^2+ABK_{0}+BK_{1}\ec\\
  \Phi_{2}&\eqdef A^3+A^2BK_{0}+ABK_{1}+BK_{2}\ec\\
  &\;\;\vdots\nonumber\\
\Phi_{M-2}&\eqdef A^{M-1}+A^{M-2}BK_{0}+\dots+BK_{M-2}\ef
\end{align}\end{subequations}
Note that, by construction, the next auxiliary matrix $\Phi_{M-1}=0$, due to \eqref{Equ:DeadbeatCondition3}. With the help of these auxiliary matrices, the constraints for the nominal states should be tightened as follows:
\begin{subequations}\label{Equ:TightenedSC}\begin{align}
  x_{1|0}&\in\BX\ominus\BD\ec\\
  x_{2|0}&\in\BX\ominus\BD\ominus\Phi_{0}\BD\ec\\
  x_{3|0}&\in\BX\ominus\BD\ominus\Phi_{0}\BD\ominus\Phi_{1}\BD\ec\\
  &\;\;\vdots\nonumber\\
  x_{M|0}&\in\BX\ominus\BD\ominus\Phi_{0}\BD\ominus\Phi_{1}\BD\ominus\hdots\ominus\Phi_{M-2}\BD\ec
\end{align}\end{subequations}
\noindent up to the deadbeat horizon, and beyond that
\addtocounter{equation}{-1}
\begin{subequations}\setcounter{equation}{5}\begin{multline}
  x_{j|0}\in\BX\ominus\BD\ominus\Phi_{0}\BD\ominus\Phi_{1}\BD\ominus\hdots\ominus\Phi_{M-2}\BD\\\fa M\leq j\leq N\ef
\end{multline}\end{subequations}

The computation of all constraint sets in (\ref{Equ:TightenedIC}a,b,c,d,...,e) and (\ref{Equ:TightenedSC}a,b,c,d,...,e) is tractable, based on Assumption \ref{The:PontryaginDiffAss}. These computations can be performed \emph{offline}.

Combining these constraints with a cost function, based on a \emph{stage cost} $\ell:\BX\times\BU\to\BR$, yields the following \textbf{DRMPC (optimization) problem} to be solved \emph{online}, in each time step:
\begin{subequations}\label{Equ:DRMPCProblem}\begin{align}
  \min\enspace&\sum_{j=0}^{N-1}\ell\left(u_{j|0},x_{j|0}\right)\\
  \text{s.t.}\enspace& x_{j+1|0}=Ax_{j|0}+Bu_{j|0}\;\;\fa\,j=0,\hdots,N-1\,,\\
   &\quad x_{0|0}=x_{0}\ec\\
   &\quad\eqref{Equ:TightenedIC}\ec\quad\eqref{Equ:TightenedSC}\ec\\
   &\quad x_{N|0}=0\ef
\end{align}\end{subequations}

\begin{remark}[\textbf{computational complexity}]\label{The:Complexity}
  The DRMPC problem \eqref{Equ:DRMPCProblem} has \emph{exactly the same} computational complexity as the corresponding Nominal MPC problem. In particular, it is same type of optimization problem, with the same number of decision variables and the same number of constraints. Hence it is real-time solvable, based on Assumption \ref{The:NominalMPC}.
\end{remark}

\subsection{System Theoretic Guarantees}\label{Sec:SystemTheory}

Under the DRMPC regime, in each time step $k=0,1,\hdots$ the first element of the optimal input sequence from \eqref{Equ:DRMPCProblem}, $u_{k}=u_{k|k}^{\star}$, is applied to the system.

\begin{theorem}[\textbf{recursive feasibility}]\label{The:RecFeasibility}
  If the DRMPC problem \eqref{Equ:DRMPCProblem} is feasible at $k=0$ for $x_{0}$, it remains feasible for all future states $x_{1},x_{2},\hdots$ of the closed-loop system \eqref{Equ:DTSystem} under the DRMPC regime, for any admissible sequence of disturbances.
\end{theorem}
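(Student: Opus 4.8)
The plan is an induction on the time step, using the receding‑horizon "shift" construction adapted to the deadbeat policy. Since the passage from step $k$ to step $k+1$ is structurally identical for every $k$, it suffices to show: if \eqref{Equ:DRMPCProblem} is feasible at $k=0$ for $x_0$, then it is feasible at $k=1$ for the realized successor $x_1 = Ax_0 + Bu_{0|0}^\star + d_0 = x_{1|0}^\star + d_0$, whatever $d_0 \in \BD$ occurs; here $(u_{j|0}^\star, x_{j|0}^\star)_j$ denotes a feasible (say optimal) solution at $k=0$.

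I would take as candidate at $k=1$ the shifted nominal plan with the new disturbance $d_0$ absorbed by the deadbeat gains: set $x_{1|1} := x_1$; $u_{j|1} := u_{j|0}^\star + K_{j-1}d_0$ for $j = 1,\dots,M$, $u_{j|1} := u_{j|0}^\star$ for $j = M+1,\dots,N-1$, and $u_{N|1} := 0$; and define the $x_{j|1}$ by the nominal recursion $x_{j+1|1} = Ax_{j|1} + Bu_{j|1}$. With the convention $\Phi_{-1} := I$, an easy induction on $j$ using $\Phi_i = A\Phi_{i-1} + BK_i$ gives $x_{j|1} = x_{j|0}^\star + \Phi_{j-2}d_0$; the deadbeat identity $\Phi_{M-1} = 0$, which follows from \eqref{Equ:DeadbeatCondition3}, together with the fact that the inputs agree with the step‑$0$ plan from index $M+1$ on, makes the correction collapse, so $x_{j|1} = x_{j|0}^\star$ for $j \ge M+1$. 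In particular $x_{N|1} = x_{N|0}^\star = 0$ and hence $x_{N+1|1} = Bu_{N|1} = 0$, so the dynamics, the initial condition $x_{1|1}=x_1$, and the terminal equality $x_{N+1|1}=0$ of \eqref{Equ:DRMPCProblem} all hold by construction.

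The substantive step is verifying the tightened constraints \eqref{Equ:TightenedIC}--\eqref{Equ:TightenedSC} at $k=1$. The key fact is that the deadbeat correction exactly undoes one layer of tightening: for any set $\mathbb{S}$, $(\mathbb{S} \ominus K_{j-1}\BD) \oplus K_{j-1}\BD \subseteq \mathbb{S}$. At $k=1$ the input $u_{j|1}$ occupies one relative position earlier than $u_{j|0}^\star$ did at $k=0$, hence it is required to lie in the $\BU$‑set tightened by one fewer Pontryagin term; since $u_{j|0}^\star \in \BU \ominus K_0\BD \ominus \dots \ominus K_{j-1}\BD$ and $d_0 \in \BD$, the displayed inclusion yields $u_{j|1} \in \BU \ominus K_0\BD \ominus \dots \ominus K_{j-2}\BD$, which is exactly the required set; for $j \ge M+1$ the fully tightened set is fixed by the shift and $u_{j|1}=u_{j|0}^\star$ already lies in it. The state inclusions follow identically, with $\BD, \Phi_0\BD, \dots$ replacing $K_0\BD, \dots$ and $x_{j|1} - x_{j|0}^\star = \Phi_{j-2}d_0$ playing the role of the correction. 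The only ingredients not produced by a pure shift are the appended terminal pair $u_{N|1}=0$, $x_{N+1|1}=0$, which need the origin to lie in the fully tightened input and state sets; for the state this is automatic, since $x_{N|0}^\star=0$ is feasible at $k=0$ and is itself subject to the fully tightened state constraint, and I expect the input side to be the single point genuinely requiring a word of justification (it is the exact analogue of the condition $0\in\BU$ that Nominal MPC with a terminal equality constraint already needs, now applied to the tightened set).

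Assembling these, the candidate is feasible for \eqref{Equ:DRMPCProblem} at $k=1$; as $d_0 \in \BD$ was arbitrary and the construction repeats verbatim at every step, feasibility propagates along the whole closed‑loop trajectory. The main obstacle is not a hard computation but getting the index/tightening bookkeeping exactly right, so that "shift by one relative position plus deadbeat correction" provably lands in the correctly tightened set, together with the clean treatment of the freshly appended terminal step.
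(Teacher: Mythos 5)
Your argument is the standard ``shift plus deadbeat correction'' construction, which is exactly what the paper's one-line proof (deferring to \cite[Thm.\,1]{Schildbach:2025}) relies on, and your index/tightening bookkeeping is correct: the identity $\Phi_{j-1}=A\Phi_{j-2}+BK_{j-1}$ does give $x_{j|1}=x_{j|0}^{\star}+\Phi_{j-2}d_{0}$, the collapse at $j\geq M+1$ via $\Phi_{M-1}=0$ is right, and $(\mathbb{S}\ominus T)\oplus T\subseteq\mathbb{S}$ applied to the last subtracted term in \eqref{Equ:TightenedIC} and \eqref{Equ:TightenedSC} lands each shifted variable in the set tightened by one fewer term, which is precisely what the step-$1$ problem demands.

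The one substantive point you flag but do not close is real, and it is not closable from the paper's stated assumptions alone: the appended input $u_{N|1}=0$ must lie in the fully tightened set $\BU\ominus K_{0}\BD\ominus\cdots\ominus K_{M-1}\BD$, which is equivalent to $K_{0}\BD\oplus\cdots\oplus K_{M-1}\BD\subseteq\BU$. Assumption \ref{The:Constraints} only gives $0\in\BU$, and feasibility at $k=0$ only certifies that the fully tightened input set is \emph{nonempty} (some $u_{j|0}^{\star}$ lies in it for $M\leq j\leq N-1$), not that it contains the origin --- unlike the state side, where $x_{N|0}^{\star}=0$ being subject to (\ref{Equ:TightenedSC}f) does certify $0\in\BX\ominus\BD\ominus\Phi_{0}\BD\ominus\cdots\ominus\Phi_{M-2}\BD$. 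So the theorem as stated needs this as an additional standing hypothesis (it is implicit in the cited reference's setup); your proof should state it rather than leave it as ``a word of justification.'' A second, minor point: your candidate uses $u_{M|0}^{\star}$, which exists only if $N\geq M+1$; under the relaxation $N\geq M$ of Remark \ref{The:ChoiceOfM}(c) the boundary case $N=M$ requires the appended input to be $K_{M-1}d_{0}$ instead of $0$, which is again covered by the same inclusion $K_{0}\BD\oplus\cdots\oplus K_{M-1}\BD\subseteq\BU$.
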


\begin{proof}
  The proof is analogous to \cite[Thm.\,1]{Schildbach:2025}.
\end{proof}

Let $\CX_{N}\subseteq\BX$ be the \emph{feasible set}, i.e., the set of initial conditions $x_{0}$ for which problem \eqref{Equ:DRMPCProblem} is feasible. The \emph{state feedback control law} $\kappa_{N}:\CX_{N}\to\BU$ of the DRMPC regime is defined as $\kappa_{N}(x_{k})=u_{k|k}^{\star}$. According to Theorem \ref{The:RecFeasibility}, $\CX_{N}$ is an RPI set on which $\kappa_{N}$ is well-defined.

Due to the presence of disturbances, the appropriate stability concept to be used is \emph{input-to-state stability (ISS)} \cite{LimonEtAl:2009,RaMaDi:2018}.

\begin{definition}[\textbf{input-to-state stability}]\label{Def:Stability}
    System \eqref{Equ:DTSystem} under any state feedback law $\kappa:\CP\to\BU$ is called \textbf{input-to-state stable (ISS)} on an RPI set $\CP\subseteq\BX$ if there exist a $\text{KL}$-function $\beta$ and a $\text{K}$-function $\gamma$ such that
    \begin{equation*}
        \|x_{k}\|\leq\beta\bigl(\|x_{0}\|,k\bigr)+\gamma\bigl(\max_{j<k}\|d_{j}\|\bigr) \quad\fa k\in\BZ_{0+}\;,
    \end{equation*}
    for any $x_{0}\in\CP$ and any disturbances $d_{0},d_{1},\hdots\in\BD$.
\end{definition}

ISS can be established based on a standard assumption on the stage cost function \cite{LimonEtAl:2009,RaMaDi:2018}.

\begin{assumption}[\textbf{stage cost}]\label{Ass:StageCost}
  The stage cost function $\ell:\BX\times\BU\rightarrow\BR_{0+}$ is chosen such that there exist two $\textrm{K}_{\infty}$-functions $\varepsilon_{1}$ and $\varepsilon_{2}$ with
  \begin{equation}\label{Equ:LyaFunction1}
    \varepsilon_{1}(\|x\|)\leq \ell\left(x,u\right)\leq \varepsilon_{2}(\|x\|)\quad\fa x\in\BX\;,\;u\in\BU\;.
  \end{equation}
\end{assumption}

\begin{theorem}[\textbf{input-to-state stability}]\label{The:Stability}
  The closed-loop system \eqref{Equ:DTSystem} under the DRMPC regime $\kappa_{N}:\CX_{N}\to\BU$ is ISS on $\CX_{N}$.
\end{theorem}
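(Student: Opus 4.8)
The plan is to establish ISS via the standard ISS-Lyapunov function argument, using the optimal value function of the DRMPC problem \eqref{Equ:DRMPCProblem} as the candidate ISS-Lyapunov function. Denote by $V_{N}:\CX_{N}\to\BR_{0+}$ the optimal value of \eqref{Equ:DRMPCProblem} as a function of the initial state $x_{0}$. The three ingredients to verify are: (i) a lower bound $V_{N}(x)\geq\varepsilon_{1}(\|x\|)$; (ii) an upper bound $V_{N}(x)\leq\alpha(\|x\|)$ for some $\textrm{K}_{\infty}$-function $\alpha$, valid at least on a neighborhood of the origin (or on all of $\CX_{N}$ if $\CX_{N}$ is bounded); and (iii) a descent inequality of the form $V_{N}(x^{+})\leq V_{N}(x)-\varepsilon_{1}(\|x\|)+\sigma(\|d\|)$ along the closed loop $x^{+}=Ax+B\kappa_{N}(x)+d$, for some $\textrm{K}$-function $\sigma$. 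Given Theorem \ref{The:RecFeasibility}, $\CX_{N}$ is RPI and $\kappa_{N}$ is well-defined on it, so these inequalities make sense; the conclusion then follows from the known converse-Lyapunov / comparison-function machinery for ISS (e.g.\ \cite{LimonEtAl:2009,RaMaDi:2018}).

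First I would dispatch the lower bound: since $\ell(x,u)\geq\varepsilon_{1}(\|x\|)$ by Assumption \ref{Ass:StageCost} and the first nominal state equals the true state, $x_{0|0}=x_{0}=x$, every feasible cost is at least $\ell(u_{0|0},x_{0|0})\geq\varepsilon_{1}(\|x\|)$, hence $V_{N}(x)\geq\varepsilon_{1}(\|x\|)$. For the upper bound, the point is that near the origin the tightened constraint sets in \eqref{Equ:TightenedIC}–\eqref{Equ:TightenedSC} and the terminal constraint $x_{N|0}=0$ still admit a feasible nominal trajectory steering $x$ to $0$ in $N\geq M$ steps (controllability, Assumption \ref{The:System}(b), together with $N\geq n\geq M$), and the associated cost can be bounded by a $\textrm{K}_{\infty}$-function of $\|x\|$ using the upper bound $\ell(x,u)\leq\varepsilon_{2}(\|x\|)$ and linearity of the reachability map; this yields $V_{N}(x)\leq\alpha(\|x\|)$ on a neighborhood of the origin, which is all that ISS requires (alternatively, boundedness of $\CX_{N}$ gives a global bound).

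The heart of the proof is the descent inequality (iii), and this is the step I expect to be the main obstacle. The standard device is a shifted candidate solution: given the optimizer at state $x$ with nominal inputs $u^{\star}_{0|0},\dots,u^{\star}_{N-1|0}$ and the realized disturbance $d=d_{0}$, construct a feasible candidate at $x^{+}=Ax+Bu^{\star}_{0|0}+d$ by shifting, i.e.\ using the nominal inputs $u^{\star}_{1|0},\dots,u^{\star}_{N-1|0}$ corrected by the deadbeat feedback terms $K_{i}d_{0}$ exactly as in \eqref{Equ:DistCompensation}, and appending a suitable terminal input (zero input at the end, since the nominal terminal state is $0$ and $u=0$ is feasible because $0\in\BU$ and the tightened sets contain a neighborhood of $0$). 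Feasibility of this candidate is precisely what the constraint tightening \eqref{Equ:TightenedIC}–\eqref{Equ:TightenedSC} and the auxiliary matrices \eqref{Equ:AuxMatrices} were designed to guarantee, and it mirrors the recursive-feasibility argument of Theorem \ref{The:RecFeasibility}; the subtlety is bookkeeping the deadbeat corrections so that after $M$ steps the perturbation caused by $d_{0}$ has been annihilated ($\Phi_{M-1}=0$), so the candidate's nominal states differ from the optimal shifted ones only through the finitely many $\Phi_{i}\BD$ and $K_{i}\BD$ terms already absorbed into the tightening. Evaluating the candidate cost gives $V_{N}(x^{+})\leq V_{N}(x)-\ell(u^{\star}_{0|0},x)+\big[\text{cost of corrected stages}-\text{cost of nominal stages}\big]$; bounding the bracketed difference by a $\textrm{K}$-function $\sigma(\|d_{0}\|)$ requires continuity of $\ell$ and the fact that the corrections are linear in $d_{0}$ with coefficients $K_{i},\Phi_{i}$ (so the perturbation magnitude is $O(\|d_{0}\|)$ uniformly over the compact feasible region). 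Combining with $\ell(u^{\star}_{0|0},x)\geq\varepsilon_{1}(\|x\|)$ yields (iii), and ISS on $\CX_{N}$ follows.
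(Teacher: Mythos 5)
Your plan follows essentially the same route as the paper, whose proof of Theorem~\ref{The:Stability} is a one-line reference to the ISS-Lyapunov-function argument of \cite[Thm.\,2]{Schildbach:2025}: take the optimal value function of \eqref{Equ:DRMPCProblem} as the ISS Lyapunov function, establish the lower bound from Assumption~\ref{Ass:StageCost}, a local upper bound, and the descent inequality via the shifted, deadbeat-corrected candidate sequence that also underlies recursive feasibility. Your sketch correctly identifies the decisive bookkeeping (the corrections being linear in $d_{0}$ with coefficients $K_{i}$, $\Phi_{i}$, and $\Phi_{M-1}=0$ annihilating the perturbation after $M$ steps), so it is a faithful expansion of what the paper delegates to the cited reference.
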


\begin{proof}
  The proof is based on the definition of an ISS Lyapunov function \cite{JiangWang:2001} and analogous to \cite[Thm.\,2]{Schildbach:2025}.
\end{proof}

\section{NUMERICAL STUDY}

DRMPC has previously been benchmarked against other standard methods of Robust MPC for small scale systems. Its performance was found to be similar to Tube MPC \cite{Mayne:2005,RakEtAl:2005} and MPC with Affine Disturbance Feedback \cite{Goulart:2006}, in terms of the size of its region of attraction \cite{Schildbach:2025}. Due the restrictiveness of the origin as the terminal set, DRMPC performs slightly worse when compared on the basis of same prediction horizon \cite{SchildbachAbbas:2025}. However, DRMPC is computationally simpler, so it performs slightly better when compared on the basis of identical (online) computation time.

Hence, instead of presenting a performance comparison, the goal of this section is to explore the boundaries of DRMPC in the context of large-scale systems. To this end, random linear systems \eqref{Equ:DTSystem} are generated,
\begin{equation}
  A=I_{n}+0.01\,\tilde{A}\qquad\text{and}\qquad B=\tilde{B}\ef
\end{equation}
The elements of $\tilde{A}\in\BR^{n\times n}$ and $\tilde{B}\in\BR^{n\times m}$ are randomly drawn from a standard normal distribution $\mathcal{N}(0,1)$, such that Assumption \ref{The:System}(b) is satisfied. The system dimensions $n$ and $m$ are variables in the experiments.

Similarly, the constraint sets $\BU$, $\BX$ and the disturbance set $\BD$ are randomly generated in the form of hyper-cubes. In detail,
\begin{subequations}
\begin{equation}
  G^{(\mathrm{u})}=
  \begin{bmatrix}
    +I_{m}\\
    -I_{m}
  \end{bmatrix}
  \,,\; G^{(\mathrm{x})}=
  \begin{bmatrix}
    +I_{n}\\
    -I_{n}
  \end{bmatrix}\,,\; G^{(\mathrm{d})}=
  \begin{bmatrix}
    +I_{n}\\
    -I_{n}
  \end{bmatrix}
\end{equation}
and the right-hand sides
\begin{equation}
  h^{(\mathrm{u})}=10\,\tilde{h}^{(\mathrm{u})}\,,\;h^{(\mathrm{x})}=100\,\tilde{h}^{(\mathrm{x})}\,,\;h^{(\mathrm{d})}=0.1\,\tilde{h}^{(\mathrm{d})}
\end{equation}
\end{subequations}
are randomly generated such that the elements of $\tilde{h}^{(\mathrm{u})},\tilde{h}^{(\mathrm{x})},\tilde{h}^{(\mathrm{d})}$ are drawn from a standard normal distribution $\mathcal{N}(0,1)$.

Since the online complexity of DRMPC is the same as for Nominal MPC, see Remark \ref{The:Complexity}, this analysis considers only the \emph{setup time}, i.e., the computation time for setting up the DRMPC problem (offline). This includes the computation of the deadbeat feedback gains \eqref{Equ:DeadbeatPolicy}, and the tightening of the input and state constraints \eqref{Equ:TightenedIC} and \eqref{Equ:TightenedSC}. 

All computations are performed on a MacBook Pro, with Apple M2 Max chip with 64 GB memory, running MatLab\textregistered\; 2024b. They only serve as a rough indication of the computational feasibility of DRMPC.

\renewcommand{\arraystretch}{1.41}
\begin{table}[h]
	\centering
	\begin{tabular}{|l|r||r|r|r|r|r|r|}
		\hline
		\multicolumn{2}{|c||}{$\,$} & \multicolumn{6}{c|}{\textbf{state dimension ($n$)}}\\\cline{3-8}
		\multicolumn{2}{|c||}{$\,$} & \multicolumn{1}{c|}{$10$} & \multicolumn{1}{c|}{$60$} & \multicolumn{1}{c|}{$120$} & \multicolumn{1}{c|}{$600$} & \multicolumn{1}{c|}{$900$} & \multicolumn{1}{c|}{$1200$}\\\hline\hline
		\multirow{4}{0.3cm}{\rotatebox{90}{\textbf{input dimension ($m$)\;}}}
		 	& $5$  &        0.14\,s &       1.16\,s &       8.45\,s &    565\,s & $\star\;\;\;$ & $\star\;\;\;$ \\\cline{2-8}
			& $10$ &        0.12\,s &       1.08\,s &       4.32\,s &    246\,s &        863\,s &       2065\,s \\\cline{2-8}
			& $30$ &  $\circ\;\;\;$ &       0.48\,s &       1.64\,s &    119\,s &        317\,s &        680\,s \\\cline{2-8}
			& $60$ &  $\circ\;\;\;$ &       0.32\,s &       1.01\,s &    66.5\,s &       181\,s &        398\,s \\\cline{2-8}
			& $120$ & $\circ\;\;\;$ & $\circ\;\;\;$ &       0.67\,s &    40.2\,s &       131\,s &        233\,s \\\cline{2-8}
			& $300$ & $\circ\;\;\;$ & $\circ\;\;\;$ & $\circ\;\;\;$ &    29.0\,s &      67.7\,s &        211\,s \\\hline
	\end{tabular}
	\caption{Total computation time required for setting up the DRMPC problem (offline). Symbols: `$\circ$' means `not applicable,' `$\star$' means `no result due to ill-conditioning.' All numbers are averaged over 100 simulation runs. \label{Tab:Results}}
\end{table}
\renewcommand{\arraystretch}{1.0}

Table \ref{Tab:Results} has been created based on a minimal selection of the deadbeat horizon, i.e., $M=\frac{m}{n}$. Only cases where $m\leq n$ have been considered. Cases where $m>n$ are marked with `$\circ$.' In cases where $m\ll n$, the problem of solving \eqref{Equ:DeadbeatCondition4} for finding the deadbeat feedback gains becomes ill-conditioned. These cases are marked with `$\star$.'

The setup times in Table \ref{Tab:Results} indicate that DRMPC is a viable approach for system dimensions that are well beyond the capabilities of Tube MPC or Robust MPC with Affine Disturbance Feedback. For a randomly generated, dense linear system, the main limitation turns out to be the conditioning of the linear equation system \eqref{Equ:DeadbeatCondition4}. The author suspects that this will not be a problem for most well-designed (i.e., properly controllable) large-scale systems in practice.

\section{CONCLUSION}

DRMPC shows a competitive performance for small-scale systems, compared to other state-of-the-art Robust MPC approaches. Moreover, DRMPC pushes the boundaries of Robust MPC to systems of large scale, i.e., with several hundred states and inputs. It has exactly the same (online) computational complexity as a Nominal MPC approach. The computational complexity of the (offline) setup of DRMPC is reduced because it does not require the computation of an RPI set, which suffers from the curse of dimensionality. This has been validated by a small numerical study.


\bibliographystyle{IEEEtran}
\bibliography{bibmath,bibcontr,bibeng,bibecon}


\end{document}